\def\R{\mathbb{R}}
\def\N{\mathbb{N}}
\newtheorem{theorem}{Theorem}
\theoremstyle{remark}
\newtheorem{remark}{Remark}
\DeclareMathOperator{\Defi}{\mathrel{\mathop:}=}
\title{\large A TRANSFER PRINCIPLE FOR DEVIATIONS PRINCIPLES}
\author{Matthias L\"owe and Raphael Meiners\thanks{Research supported by German
National Academic Foundation}
\\
{\small \it Institute for Mathematical Statistics, University of M\"unster, Germany}}
\begin{document}
\maketitle
\vspace{-12pt}
\abstract{\noindent This note is not intended for publication. It provides a tool to infer moderate deviations principles for specific random variables from deviations principles for their Hubbard-Stratonovich transforms. This is needed for \cite{Loe2012}, wherefrom all notation is adopted.}

\vspace{2cm}
\noindent This article's sole purpose is to state and prove the following theorem:

\begin{theorem}\label{trans}
Let $m$ be a (local or global) minimum of $G$ and let $m$ be of type $k$ and strength $\lambda$.
\begin{itemize}
\item[(i)] Suppose that $$\left(P_{n,\beta}^h \circ \left(\frac{S_n-nm}{n^{\alpha}} + \frac{W}{n^{\alpha-\frac12}}\right)^{-1}\right)_{n \in \N}$$
satisfies for $\mu$-a.\,e.\ realization $h$ of $\mathbf{h}$ an MDP with speed $n^{1-2k(1-\alpha)}$ and rate function \begin{equation}\label{eq:rate function J}
J(x) ~\Defi~ J_{\lambda, k}(x) ~\Defi~ \frac{\lambda x^{2k}}{(2k)!}.
\end{equation}
Then, $$\left(P_{n,\beta}^\mathbf{h} \circ \left(\frac{S_n-nm}{n^{\alpha}}\right)^{-1}\right)_{n \in \N}$$ satisfies $\mu$-a.\,s.\ an MDP with speed $n^{1-2k(1-\alpha)}$ and rate function \begin{equation}\label{eq:rate function}
I(x) ~\Defi~ I_{k, \lambda, \beta}(x) ~\Defi~
\begin{cases}
\frac{x^2}{2 \sigma^2},& \text{ if } k =1,\\
\frac{\lambda x^{2k}}{(2 k)!},& \text{ if } k \geq2,\\
\end{cases}
\end{equation}
where $\sigma^2\Defi \lambda^{-1} - \beta^{-1} > 0$.
\item[(ii)] Suppose that 
\begin{equation*}
\beta ~>~ \frac{2 h}{b^2}.
\end{equation*}
Let $c$ be the supremum of all $x \in (0, (b  -\sqrt{2 h/\beta})/2]$ such that $m$ is the only minimum of $G$ in $[m-x,m+x]$ and fix $0<a<c$. Suppose that 
\begin{equation*}
\left(P_{n,\beta}^h \left(\frac{S_n-n m}{n^{\alpha}} + \frac{W}{n^{\alpha-\frac12}} \in \bullet \Big|\, \frac{S_n-n m}{n^{\alpha}} + \frac{W}{n^{\alpha-\frac12}} \in [-a n^{1-\alpha},a n^{1-\alpha}]\right)\right)_{n \in \N}
\end{equation*}
satisfies for $\mu$-a.\,e.\ realization $h$ of $\mathbf{h}$ an MDP with speed $n^{1-2k(1-\alpha)}$ and rate function $J$ given by \eqref{eq:rate function J}. Then, 
\begin{equation*}
\left(P_{n,\beta}^\mathbf{h} \left(\frac{S_n-n m}{n^{\alpha}} \in \bullet \Big|\, \frac{S_n}{n} \in [m-a,m+a]\right)\right)_{n \in \N}
\end{equation*}
satisfies $\mu$-a.\,s.\ an MDP with speed $n^{1-2k(1-\alpha)}$ and rate function $I$ given by \eqref{eq:rate function}.
\end{itemize}
\end{theorem}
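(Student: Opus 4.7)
The Hubbard--Stratonovich construction built into $P_{n,\beta}^{h}$ makes $W$ a Gaussian of variance $1/\beta$ independent of $S_n$. Writing
\begin{equation*}
X_n := \frac{S_n-nm}{n^\alpha}, \qquad Y_n := \frac{W}{n^{\alpha-1/2}}, \qquad v_n := n^{1-2k(1-\alpha)},
\end{equation*}
the task is to transfer the assumed MDP for $X_n+Y_n$ at speed $v_n$ to one for $X_n$. I would prove part~(i) by a case split on $k$ and then reduce part~(ii) to part~(i).

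For part~(i) with $k\ge 2$ the perturbation is \emph{super-exponentially negligible} at speed $v_n$: since $Y_n$ has variance $\beta^{-1}n^{1-2\alpha}$ and $n^{2\alpha-1}/v_n = n^{2(k-1)(1-\alpha)}\to\infty$, a Gaussian tail bound gives $P_{n,\beta}^h(|Y_n|>\varepsilon) \le 2\exp\bigl(-\tfrac12\varepsilon^2\beta n^{2\alpha-1}\bigr) = o(e^{-Cv_n})$ for every $C>0$. Hence $X_n$ and $X_n+Y_n$ are exponentially equivalent at speed $v_n$ and share the same MDP, whose rate $J$ coincides with $I$ in this regime. For $k=1$ instead, $Y_n$ sits at exactly the MDP scale; here I would transfer at the level of log--moment generating functions. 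By independence and Gaussianity of $Y_n$, using that $v_n/n^{2\alpha-1}=1$ for $k=1$,
\begin{equation*}
\frac{1}{v_n}\log E\bigl[e^{v_n t(X_n+Y_n)}\bigr] ~=~ \frac{1}{v_n}\log E\bigl[e^{v_n t X_n}\bigr] + \frac{t^2}{2\beta}.
\end{equation*}
Varadhan's lemma applied to the given MDP of $X_n+Y_n$ (with rate $J(x)=\lambda x^2/2$) identifies the left-hand side's limit as $\Lambda_J(t)=t^2/(2\lambda)$; subtracting $t^2/(2\beta)$ yields the limiting log-MGF of $X_n$,
\begin{equation*}
\Lambda_I(t) ~=~ \frac{t^2}{2}\Bigl(\frac{1}{\lambda}-\frac{1}{\beta}\Bigr) ~=~ \frac{\sigma^2 t^2}{2},
\end{equation*}
which is $C^\infty$ and strictly convex, so the Gärtner--Ellis theorem produces the MDP for $X_n$ at speed $v_n$ with rate $I(x)=x^2/(2\sigma^2)$.

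For part~(ii) I would argue on the conditioned measures. Any bounded set $A$ (the only kind an MDP sees) is contained in $[-an^{1-\alpha},an^{1-\alpha}]$ for $n$ large, so each conditional probability equals the unconditional probability divided by the normalising mass of the window. The hypotheses that $m$ is the unique minimum of $G$ on $[m-a,m+a]$ and $a<(b-\sqrt{2h/\beta})/2$ provide, via standard Laplace-type estimates, strictly positive $\mu$-a.\,s.\ limits for both $P_{n,\beta}^h(X_n \in [-an^{1-\alpha},an^{1-\alpha}])$ and $P_{n,\beta}^h(X_n+Y_n \in [-an^{1-\alpha},an^{1-\alpha}])$; since $|Y_n|=O(n^{1/2-\alpha})$ is of smaller order than the window width $n^{1-\alpha}$, the two masses differ by a factor tending to $1$. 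The part~(i) analysis then applies to the conditional measures essentially verbatim.

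The main technical obstacle is the $k=1$ case of part~(i): converting the MDP upper bound on $X_n+Y_n$ into a uniform exponential-moment estimate that legitimises the Varadhan step. The quadratic growth of $J$ combined with the a priori bound $|X_n|\le b\,n^{1-\alpha}$ from the bounded spins supplies this through a standard truncation, but the truncation must be controlled uniformly in $t$ over a neighbourhood of the origin so that $\Lambda_I$ is identified as a bona fide limit and Gärtner--Ellis becomes applicable.
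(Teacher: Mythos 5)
Your treatment of part~(i) for $k\ge 2$ coincides with the paper's: exponential equivalence of $X_n$ and $X_n+Y_n$ at speed $n^{1-2k(1-\alpha)}$. For $k=1$, however, you take a genuinely different route. The paper proves one-sided tail estimates $\lim n^{1-2\alpha}\log P^h_{n,\beta}(X_n\ge x)=-I(x)$ (and similarly for $x\le 0$) directly, using independence via $P(X_n\ge x)P(Y_n\ge x_0-x)\le P(X_n+Y_n\ge x_0)$ together with the algebraic identity $I(x)=-J(x_0)+K(x_0-x)$ for $x_0=\frac{\beta}{\beta-\lambda}x$; the lower bound needs a delicate chaining argument to show that, conditionally on $X_n+Y_n\ge x_0+\varepsilon$, neither $X_n$ nor $Y_n$ can be too small. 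Your Varadhan/G\"artner--Ellis route is cleaner when it works, but the step you flag is a real gap, not a standard truncation. The moment condition for Varadhan's lemma factorises as $\frac{1}{v_n}\log E[e^{\gamma v_n t(X_n+Y_n)}]=\frac{1}{v_n}\log E[e^{\gamma v_n tX_n}]+\gamma^2 t^2/(2\beta)$, and the a~priori bound $|X_n|\le C n^{1-\alpha}$ only gives $\frac{1}{v_n}\log E[e^{\gamma v_n tX_n}]\le \gamma|t|Cn^{1-\alpha}\to\infty$, so boundedness of the spins does \emph{not} supply the required $\limsup<\infty$. The exponential moment condition has to be extracted from the MDP upper bound itself (via exponential tightness at speed $v_n$ and a careful Laplace estimate on the tail), and establishing this from the hypothesis alone is precisely the work the paper's direct argument is organized to avoid.

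For part~(ii) your reduction is close in spirit to the paper's, but contains a factual error and elides the crux. The claim that $P^h_{n,\beta}(S_n/n\in[m-a,m+a])$ and $P^h_{n,\beta}(S_n/n+W/\sqrt n\in[m-a,m+a])$ have strictly positive $\mu$-a.\,s.\ limits is false in general: $m$ may be a local but not global minimum of $G$, in which case both masses vanish exponentially at speed $n$. What is true, and what the paper proves, is that the two conditionings are \emph{exponentially equivalent at the MDP scale} $n^{1-2k(1-\alpha)}$; this requires the LDPs for $S_n/n$ and $S_n/n+W/\sqrt n$ at speed $n$ (with rates $I_\beta^\nu$ and $G-\inf G$ respectively) together with the hypothesis $\beta>2h/b^2$ and $a<c$, which enter exactly to control the boundary contribution $P^h_{n,\beta}(S_n/n+W/\sqrt n\in[m\pm a,m\pm\tilde a])$ relative to the window mass. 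Once the conditioning is replaced by $\{X_n\in B_n\}$, $X_n$ and $Y_n$ remain independent under the conditioned law, and only then can the part~(i) machinery be applied verbatim. Your remark ``the part~(i) analysis then applies to the conditional measures essentially verbatim'' is thus correct in conclusion but silently skips the exponential-equivalence lemma that makes it so; under the \emph{assumed} conditioning $\{X_n+Y_n\in B_n\}$, $X_n$ and $Y_n$ are no longer independent, and the part~(i) argument does not apply directly.
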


\begin{remark}
Using Lebesgue's dominated convergence theorem we see
\begin{equation*}
G''(x) ~=~ \beta - \beta^2 \int_{\R}\frac{1}{\cosh^2(\beta(x+y))}d\nu(y) ~<~ \beta.
\end{equation*}
Since for $k=1$ there exists $x_{\text{max}} \in \R$ such that $\lambda = G''(x_{\text{max}})$, we immediately see $\beta > \lambda$ and, consequently, $\sigma^2 > 0$. 
\end{remark}

\begin{proof}[Proof of Theorem \ref{trans}]
Let $X_n \Defi (S_n-nm)/n^{\alpha}$ and $Y_n \Defi W/n^{\alpha-1/2}$. Choose $h$ such that $P_{n,\beta}^h \circ (X_n + Y_n)^{-1}$ in case (i) resp.\ $P_{n,\beta}^h (X_n + Y_n \in \bullet |\, X_n + Y_n \in [-a n^{1-\alpha},a n^{1-\alpha}])$ in case (ii) satisfy MDPs with speed $n^{2\alpha-1}$ and rate function $J$. This can be done with probability 1 due to the assumptions. Moreover, note that $P_{n,\beta}^h \circ Y_n^{-1}$ satisfies an MDP with speed $n^{2\alpha-1}$ and rate function $K(x) = \beta x^2 /2$ as it can be seen by means of the G\"{a}rtner-Ellis Theorem.

ad (i): Let us first consider the case $k>1$ and see that the influence of the Gaussian random variable vanishes for $n \to \infty$. Fix $\varepsilon > 0$ and note that
\begin{eqnarray*}
&& \limsup_{n \to \infty} \frac{1}{n^{1-2k(1-\alpha)}} \ln P_{n,\beta}^h\left(\left|X_n+Y_n-X_n\right|> \varepsilon\right)\\
&=& \limsup_{n \to \infty} \frac{1}{n^{1-2k(1-\alpha)}} \ln P_{n,\beta}^h\left(|Y_n|> \varepsilon\right)\\
&=& \limsup_{n \to \infty} \frac{n^{2(k-1)(1-\alpha)}}{n^{2\alpha-1}} \ln P_{n,\beta}^h\left(|Y_n|> \varepsilon\right)\\
&=& -\infty,
\end{eqnarray*}
since $2(k-1)(1-\alpha) > 0$, and, by the MDP for $P_{n,\beta}^h \circ Y_n^{-1}$,
\begin{equation*}
\lim_{n \to \infty} \frac{1}{n^{2\alpha-1}} \ln P_{n,\beta}^h\left(|Y_n|> \varepsilon\right) ~=~ -K(\varepsilon) ~<~ 0.
\end{equation*}
Therefore, $X_n + Y_n$ and $X_n$ are exponentially equivalent on the scale $n^{1-2k(1-\alpha)}$ and, thus, satisfy the same MDP (cf.\ Theorem 4.2.13 in \cite{Demb1998}). 

\noindent Now consider the case $k=1$. Note that it suffices to prove 
\begin{align}
\begin{split}
\lim_{n \to \infty}\frac{1}{n^{2\alpha-1}} \ln P_{n,\beta}^h (X_n \geq x) &= - I(x) \text{ for $x \geq 0$ and }\\
\lim_{n \to \infty}\frac{1}{n^{2\alpha-1}} \ln P_{n,\beta}^h (X_n \leq x) &= - I(x) \text{ for $x \leq 0$}
\end{split}
\label{eq:sufficient}
\end{align}
to gain the full MDP for $P_{n,\beta}^h \circ X_n^{-1}$, i.\,e.
\begin{eqnarray*}
\limsup_{n \to \infty} \frac{1}{n^{2\alpha-1}} \ln P_{n,\beta}^h (X_n \in C) &\le& - \inf_{x \in C} I(x) \text{ for every closed set $C \subset \R$,}\\
\liminf_{n\to \infty} \frac{1}{n^{2\alpha-1}} \ln P_{n,\beta}^h (X_n \in O) &\ge& - \inf_{x \in O} I(x) \text{ for every open set $O \subset \R$.}
\end{eqnarray*}
Indeed, if $0 \in C$, then $\inf_{x \in C} I(x) = 0$ and the upper bound holds trivially as $\ln P_{n,\beta}^h (X_n \in C)$ is always non-positive. On the other hand, if $0 \notin C$, we can define $a \Defi dist(C, \{0\})$, which is positive as $C$ is closed. Using \eqref{eq:sufficient} we obtain the general upper bound
\begin{eqnarray*}
&& \limsup_{n \to \infty} \frac{1}{n^{2\alpha-1}} \ln P_{n,\beta}^h (X_n \in C)\\
&\leq& \limsup_{n \to \infty} \frac{1}{n^{2\alpha-1}} \ln P_{n,\beta}^h (X_n \in (-\infty,-a] \cup [a,\infty))\\
&=& \limsup_{n \to \infty} \frac{1}{n^{2\alpha-1}} \ln (P_{n,\beta}^h (X_n \leq -a) + P_{n,\beta}^h (X_n \geq a))\\
&=& max\left\{\limsup_{n \to \infty} \frac{1}{n^{2\alpha-1}} \ln P_{n,\beta}^h (X_n \leq -a),\limsup_{n \to \infty} \frac{1}{n^{2\alpha-1}} \ln P_{n,\beta}^h (X_n \geq a)\right\}\\
&=& -I(a)\\
&=& - \inf_{x \in C} I(x)
\end{eqnarray*}
where we have made use of Lemma 1.2.15 from \cite{Demb1998} to derive the last but two line and of $I$'s monotonicity to derive the last line. To see that also the general lower bound follows from \eqref{eq:sufficient}, we first note that \eqref{eq:sufficient} implies the lower bound for arbitrary balls $B_\varepsilon(x) \Defi \{y \in \R|\, |x-y| < \varepsilon\}$ of radius $\varepsilon> 0$ centered at $x \in \R$:
\begin{itemize}
\item First case: $\varepsilon > |x|$\\
There exists $\delta > 0$ such that $B_\delta(0) \subset B_\varepsilon(x)$ and, consequently,
\begin{eqnarray*}
\liminf_{n\to \infty} \frac{1}{n^{2\alpha-1}} \ln P_{n,\beta}^h (X_n \in B_\varepsilon(x))
&\geq& \liminf_{n\to \infty} \frac{1}{n^{2\alpha-1}} \ln P_{n,\beta}^h (|X_n| < \delta)\\
&=& 0\\
&=& - \inf_{y \in B_\varepsilon(x)} I(y),
\end{eqnarray*}
where we have used in the second line that \eqref{eq:sufficient} implies $P_{n,\beta}^h (|X_n| < \delta) = 1-P_{n,\beta}^h (X_n \geq \delta)-P_{n,\beta}^h (X_n \leq -\delta) \rightarrow 1$.
\item Second case: $x \geq \varepsilon$\\
\eqref{eq:sufficient} yields for every $\delta > 0$ and $n$ sufficiently large
\begin{eqnarray*}
P_{n,\beta}^h (X_n \geq x - \varepsilon + \delta) &\geq& e^{n^{2\alpha-1}(-I(x-\varepsilon+\delta)-\delta)},\\
P_{n,\beta}^h (X_n \geq x + \varepsilon) &\leq& e^{n^{2\alpha-1}(-I(x+\varepsilon)+\delta)}.
\end{eqnarray*}
Since $I$ is a continuous function with $I(x+\varepsilon) > I(x-\varepsilon)$ we get
\begin{equation*}
-I(x+\varepsilon)+\delta+I(x-\varepsilon+\delta)+\delta ~<~ 0
\end{equation*}
for $\delta > 0$ sufficiently small and, therefore, 
\begin{eqnarray*}
&& \liminf_{n\to \infty} \frac{1}{n^{2\alpha-1}} \ln P_{n,\beta}^h (X_n \in B_\varepsilon(x))\\
&\geq& \liminf_{n\to \infty} \frac{1}{n^{2\alpha-1}} \ln (P_{n,\beta}^h (X_n \geq x - \varepsilon + \delta)-P_{n,\beta}^h (X_n \geq x + \varepsilon))\\
&\geq& \liminf_{n\to \infty} \frac{1}{n^{2\alpha-1}} \ln (e^{n^{2\alpha-1}(-I(x-\varepsilon+\delta)-\delta)}(1-e^{n^{2\alpha-1}(-I(x+\varepsilon)+\delta+I(x-\varepsilon+\delta)+\delta)}))\\
&=& -I(x-\varepsilon+\delta)-\delta
\end{eqnarray*}
for $\delta > 0$ sufficiently small. Taking $\delta \searrow 0$ yields 
\begin{equation*}
\liminf_{n\to \infty} \frac{1}{n^{2\alpha-1}} \ln P_{n,\beta}^h (X_n \in B_\varepsilon(x)) ~\geq~ -I(x-\varepsilon) ~=~ - \inf_{y \in B_\varepsilon(x)} I(y).
\end{equation*}
\item Third case: $x \leq - \varepsilon$\\
Again, \eqref{eq:sufficient} yields for every $\delta > 0$ and $n$ sufficiently large
\begin{eqnarray*}
P_{n,\beta}^h (X_n \leq x + \varepsilon - \delta) &\geq& e^{n^{2\alpha-1}(-I(x+\varepsilon-\delta)-\delta)},\\
P_{n,\beta}^h (X_n \leq x - \varepsilon) &\leq& e^{n^{2\alpha-1}(-I(x-\varepsilon)+\delta)},
\end{eqnarray*}
which implies 
\begin{eqnarray*}
&& \liminf_{n\to \infty} \frac{1}{n^{2\alpha-1}} \ln P_{n,\beta}^h (X_n \in B_\varepsilon(x))\\
&\geq& \liminf_{n\to \infty} \frac{1}{n^{2\alpha-1}} \ln (P_{n,\beta}^h (X_n \leq x + \varepsilon - \delta)-P_{n,\beta}^h (X_n \leq x - \varepsilon))\\
&\geq& \liminf_{n\to \infty} \frac{1}{n^{2\alpha-1}} \ln (e^{n^{2\alpha-1}(-I(x+\varepsilon-\delta)-\delta)}(1-e^{n^{2\alpha-1}(-I(x-\varepsilon)+\delta+I(x+\varepsilon-\delta)+\delta)})).
\end{eqnarray*} 
Since $I(x-\varepsilon) > I(x+\varepsilon)$ the continuity of $I$ yields
\begin{equation*}
-I(x-\varepsilon)+\delta+I(x+\varepsilon-\delta)+\delta ~<~ 0
\end{equation*}
for $\delta > 0$ sufficiently small and, consequently, 
\begin{equation*}
\liminf_{n\to \infty} \frac{1}{n^{2\alpha-1}} \ln P_{n,\beta}^h (X_n \in B_\varepsilon(x))
~\geq~ -I(x+\varepsilon-\delta)-\delta
\end{equation*}
for $\delta > 0$ sufficiently small. Once again, taking $\delta \searrow 0$ yields 
\begin{equation*}
\liminf_{n\to \infty} \frac{1}{n^{2\alpha-1}} \ln P_{n,\beta}^h (X_n \in B_\varepsilon(x)) ~\geq~ -I(x+\varepsilon) ~=~ - \inf_{y \in B_\varepsilon(x)} I(y).
\end{equation*}
\end{itemize}
The lower bound for open balls already gives the lower bound for arbitrary open sets. In fact, fix $G \subset \R$ open and let $x$ be an element of $G$ (the case $G = \emptyset$ holds trivially). Then, there exists $\varepsilon > 0$ s.\,t. $B_{\varepsilon}(x) \subset G$ and therefore
\begin{eqnarray*}
\liminf_{n\to \infty} \frac{1}{n^{2\alpha-1}} \ln P_{n,\beta}^h (X_n \in G) 
&\geq& \liminf_{n\to \infty} \frac{1}{n^{2\alpha-1}} \ln P_{n,\beta}^h (X_n \in B_\varepsilon(x))\\
&\geq& - \inf_{y \in B_\varepsilon(x)} I(y)\\
&\geq& -I(x) 
\end{eqnarray*}
for every $x \in G$. Taking the supremum over all $x \in G$ gives the desired lower bound. In a nutshell, we have seen that \eqref{eq:sufficient} yields the desired MDP and, therefore, we are left with a proof of \eqref{eq:sufficient}, which we start with a first observation:
\begin{eqnarray}\label{eq:obser}
I(x) 
&=& -\frac{\lambda \beta}{2(\beta-\lambda)}x^2\nonumber\\
&=& - \frac{\lambda\beta^2}{2(\beta-\lambda)^2}x^2+\frac{\beta\lambda^2}{2(\beta-\lambda)^2}x^2\nonumber\\
&=& - \frac{\lambda}{2}x_0^2+\frac{\beta}{2}(x_0-x)^2\nonumber\\
&=& - J(x_0)+K(x_0-x),
\end{eqnarray}
where $x_0 \Defi \frac{\beta}{\beta-\lambda} x$.\\

\noindent \emph{Upper bounds in \eqref{eq:sufficient}:}

\noindent Let us first consider the case $x \geq 0$. Since $X_n$ and $Y_n$ are independent, we have 
\begin{equation*}
P_{n,\beta}^h(X_n \geq x) P_{n,\beta}^h (Y_n \geq x_0-x) ~\leq~ P_{n,\beta}^h (X_n+Y_n \geq x_0) 
\end{equation*}
respectively
\begin{equation*}
P_{n,\beta}^h(X_n \geq x) ~\leq~ \frac{P_{n,\beta}^h (X_n+Y_n \geq x_0)}{P_{n,\beta}^h (Y_n \geq x_0-x)}.
\end{equation*}
Using the MDPs for $P_{n,\beta}^h \circ Y_n^{-1}$ and $P_{n,\beta}^h \circ (X_n +Y_n)^{-1}$, we see
\begin{eqnarray*}
\limsup_{n \to \infty}\frac{1}{n^{2\alpha-1}} \ln P_{n,\beta}^h (X_n \geq x) 
&\leq& \limsup_{n \to \infty}\frac{1}{n^{2\alpha-1}} \ln \frac{P_{n,\beta}^h (X_n+Y_n \geq x_0)}{P_{n,\beta}^h (Y_n \geq x_0-x)}\\
&=& -J(x_0)+K(x_0-x)\\
&=& I(x)
\end{eqnarray*}
where we have used the introductory observation \eqref{eq:obser}. In the remaining case $x\leq 0$, we have 
\begin{equation*}
P_{n,\beta}^h(X_n \leq x) ~\leq~ \frac{P_{n,\beta}^h (X_n+Y_n \leq x_0)}{P_{n,\beta}^h (Y_n \leq x_0-x)},
\end{equation*}
and with the same arguments as before we prove
\begin{equation*}
\limsup_{n \to \infty}\frac{1}{n^{2\alpha-1}} \ln P_{n,\beta}^h (X_n \leq x) ~\leq~ I(x).
\end{equation*}

\emph{Lower bounds in \eqref{eq:sufficient}:}

\noindent Again, we first consider the case $x \geq 0$. Using \eqref{eq:obser} and the continuity of $J$ it suffices to show
\begin{equation*}
\liminf_{n \to \infty}\frac{1}{n^{2\alpha-1}} \ln P_{n,\beta}^h (X_n \geq x) ~\geq~ -J(x_0 +\varepsilon)+K(x_0-x)
\end{equation*}
for every $\varepsilon > 0$ or, equivalently,
\begin{equation}\label{eq:tz}
\liminf_{n \to \infty}\frac{1}{n^{2\alpha-1}} \ln \frac{P_{n,\beta}^h (X_n \geq x) P_{n,\beta}^h (Y_n \geq x_0-x)}{P_{n,\beta}^h (X_n +Y_n \geq x_0+\varepsilon)} ~\geq~ 0
\end{equation}
for all $\varepsilon > 0$, where we have made use of the MDPs for $P_{n,\beta}^h \circ Y_n^{-1}$ and $P_{n,\beta}^h \circ (X_n +Y_n)^{-1}$. Fix $\varepsilon > 0$ and note that since
\begin{eqnarray*}
&& \frac{P_{n,\beta}^h (X_n \geq x) P_{n,\beta}^h (Y_n \geq x_0-x)}{P_{n,\beta}^h (X_n +Y_n \geq x_0+\varepsilon)}\\
&\geq& P_{n,\beta}^h (X_n \geq x, Y_n \geq x_0-x|\, X_n +Y_n \geq x_0 +\varepsilon)\\ 
&=& 1- P_{n,\beta}^h (X_n < x|\, X_n +Y_n \geq x_0 +\varepsilon) - P_{n,\beta}^h (Y_n < x_0-x|\, X_n +Y_n \geq x_0 +\varepsilon) 
\end{eqnarray*}
\eqref{eq:tz} follows once we have proved
\begin{eqnarray}
P_{n,\beta}^h (X_n < x|\, X_n +Y_n \geq x_0 +\varepsilon) &=& o(1),\label{eq:11}\\
P_{n,\beta}^h (Y_n < x_0-x|\, X_n +Y_n \geq x_0 +\varepsilon) &=& o(1) \label{eq:12}.
\end{eqnarray}

We start with a proof of \eqref{eq:11}. It is straightforward to see that
\begin{eqnarray*}
&& P_{n,\beta}^h (X_n < x_0(1-\sqrt{\lambda/\beta})|\, X_n +Y_n \geq x_0 +\varepsilon) \\
&=& \frac{P_{n,\beta}^h (X_n < x_0-x_0\sqrt{\lambda/\beta}, X_n +Y_n \geq x_0 +\varepsilon)}{P_{n,\beta}^h (X_n +Y_n \geq x_0 +\varepsilon)}\\
&\leq& \frac{P_{n,\beta}^h (Y_n \geq x_0\sqrt{\lambda/\beta} +\varepsilon)}{P_{n,\beta}^h (X_n +Y_n \geq x_0 +\varepsilon)},
\end{eqnarray*}
which again can be bounded using the MDPs for $P_{n,\beta}^h \circ Y_n^{-1}$ and $P_{n,\beta}^h \circ (X_n +Y_n)^{-1}$, which yield 
\begin{eqnarray*}
P_{n,\beta}^h (Y_n \geq x_0\sqrt{\lambda/\beta} +\varepsilon) &\leq& e^{-n^{2\alpha-1} (K(x_0\sqrt{\lambda/\beta} +\varepsilon) -\frac{\beta-\lambda}{4}\varepsilon^2)},\\
P_{n,\beta}^h (X_n +Y_n \geq x_0 +\varepsilon) &\geq& e^{-n^{2\alpha-1} (J(x_0 +\varepsilon) +\frac{\beta-\lambda}{4}\varepsilon^2)}
\end{eqnarray*}
for $n$ sufficiently large. Consequently, for $n$ sufficiently large
\begin{eqnarray*}
&& P_{n,\beta}^h (X_n < x_0(1-\sqrt{\lambda/\beta})|\, X_n +Y_n \geq x_0 +\varepsilon) \\
&\leq& e^{-n^{2\alpha-1} (K(x_0\sqrt{\lambda/\beta} +\varepsilon) -\frac{\beta-\lambda}{4}\varepsilon^2 -J(x_0 +\varepsilon) -\frac{\beta-\lambda}{4}\varepsilon^2)}\\
&=& e^{-n^{2\alpha-1} (\frac{\beta}{2}(x_0\sqrt{\lambda/\beta} +\varepsilon)^2 - \frac{\lambda}{2} (x_0 +\varepsilon)^2 -\frac{\beta-\lambda}{2}\varepsilon^2)}\\
&=& e^{-n^{2\alpha-1} \varepsilon x_0 \sqrt{\lambda}(\sqrt{\beta}-\sqrt{\lambda})}\\
&=& o(1).
\end{eqnarray*}
If $x = 0$, then $x_0(1-\sqrt{\lambda/\beta}) = x$ and we are done proving \eqref{eq:11}. Otherwise, $x_0(1-\sqrt{\lambda/\beta}) < x$, and we need to show that $P_{n,\beta}^h (x_0(1-\sqrt{\lambda/\beta})\leq X_n < x|\, X_n +Y_n \geq x_0 +\varepsilon)$ is a zero sequence. Let us devide, to that end, the interval $[x_0(1-\sqrt{\lambda/\beta}), x)$ into $M \in \N$ subintervals, each of lenght $\hat{x}/M$ where $\hat{x} \Defi x-x_0(1-\sqrt{\lambda/\beta}) > 0$. We get
\begin{eqnarray}
&& P_{n,\beta}^h (x_0(1-\sqrt{\lambda/\beta})\leq X_n < x|\, X_n +Y_n \geq x_0 +\varepsilon)\nonumber\\
&=& \sum_{i=1}^M \frac{P_{n,\beta}^h (x_0(1-\sqrt{\lambda/\beta})+\frac{i-1}{M}\hat{x} \leq X_n < x_0(1-\sqrt{\lambda/\beta})+\frac{i}{M}\hat{x}, X_n +Y_n \geq x_0 +\varepsilon)}{P_{n,\beta}^h (X_n +Y_n \geq x_0 +\varepsilon)}\nonumber\\
&\leq& \sum_{i=1}^M \frac{P_{n,\beta}^h (x_0(1-\sqrt{\lambda/\beta})+\frac{i-1}{M}\hat{x} \leq X_n) P_{n,\beta}^h(Y_n \geq x_0 +\varepsilon-x_0(1-\sqrt{\lambda/\beta})-\frac{i}{M}\hat{x})}{P_{n,\beta}^h (X_n +Y_n \geq x_0 +\varepsilon)}\nonumber\\
&=& \sum_{i=1}^M \frac{P_{n,\beta}^h (X_n \geq x_0(1-\sqrt{\lambda/\beta})+\frac{i-1}{M}\hat{x}) P_{n,\beta}^h(Y_n \geq x_0\sqrt{\lambda/\beta}+\varepsilon-\frac{i}{M}\hat{x})}{P_{n,\beta}^h (X_n +Y_n \geq x_0 +\varepsilon)}.\label{eq:13}
\end{eqnarray}
Using the upper bound for $\limsup_{n \to \infty}\frac{1}{n^{2\alpha-1}} \ln P_{n,\beta}^h (X_n \geq \cdot)$, which we have obtained before, and the MDPs for $P_{n,\beta}^h \circ Y_n^{-1}$ and $P_{n,\beta}^h \circ (X_n +Y_n)^{-1}$, we get for every $1\leq i \leq M$ and $n$ sufficiently large
\begin{eqnarray*}
P_{n,\beta}^h (X_n \geq x_0(1-\sqrt{\lambda/\beta})+ (i-1) \hat{x}/M) &\leq& e^{-n^{2\alpha-1}(I(x_0(1-\sqrt{\lambda/\beta})+\frac{i-1}{M}\hat{x})-\delta/3)}, \\
P_{n,\beta}^h(Y_n \geq x_0\sqrt{\lambda/\beta}+\varepsilon-i\hat{x}/M) &\leq& ^{-n^{2\alpha-1}(K(x_0\sqrt{\lambda/\beta}+\varepsilon-\frac{i}{M}\hat{x})-\delta/3)},\\
P_{n,\beta}^h (X_n +Y_n \geq x_0 +\varepsilon) &\geq& e^{-n^{2\alpha-1}(J(x_0 +\varepsilon)+\delta/3)},
\end{eqnarray*}
where $\delta \Defi (\beta-\lambda)\varepsilon^2/4 > 0$. Inserting these estimates in \eqref{eq:13} yields
\begin{eqnarray}
&& P_{n,\beta}^h (x_0(1-\sqrt{\lambda/\beta})\leq X_n < x|\, X_n +Y_n \geq x_0 +\varepsilon)\nonumber\\
&\leq& \sum_{i=1}^M e^{-n^{2\alpha-1}(I(x_0(1-\sqrt{\lambda/\beta})+\frac{i-1}{M}\hat{x})+K(x_0\sqrt{\lambda/\beta}+\varepsilon-\frac{i}{M}\hat{x})-J(x_0 +\varepsilon)-\delta)}\label{eq:14}
\end{eqnarray}
To find an upper bound for \eqref{eq:14}, we need to find the dominating summand. Note to this purpose that the function 
\begin{gather*}
f:\R\rightarrow\R,\\
z ~\mapsto~ I(x_0(1-\sqrt{\lambda/\beta})-\hat{x}/M+z \hat{x})+K(x_0\sqrt{\lambda/\beta}+\varepsilon-z\hat{x})
\end{gather*}
is decreasing on [0,1]:
\begin{eqnarray*}
&& \sup_{z \in [0,1]}f^{'}(z) \\
&=& \sup_{z \in [0,1]}\left\{\frac{\hat{x}}{\sigma^2}(x_0(1-\sqrt{\lambda/\beta})-\hat{x}/M+z \hat{x}) - \beta \hat{x}(x_0\sqrt{\lambda/\beta}+\varepsilon-z\hat{x})\right\}\\
&=& \frac{\hat{x}}{\sigma^2}(x_0(1-\sqrt{\lambda/\beta})-\hat{x}/M+\hat{x}) - \beta \hat{x}(x_0\sqrt{\lambda/\beta}+\varepsilon-\hat{x})\\
&<& \hat{x} \left(\frac{x_0(1-\sqrt{\lambda/\beta})}{\sigma^2}+\frac{\hat{x}}{\sigma^2} - \beta x_0 \sqrt{\lambda/\beta}+\beta \hat{x}\right)\\
&=& \hat{x} \left(\frac{x}{\sigma^2} + \beta x-\beta x_0\right)\\
&=& \hat{x} x \left(\frac{\beta \lambda}{\beta - \lambda} + \beta -\frac{\beta^2}{\beta-\lambda} \right)\\
&=& 0.
\end{eqnarray*}
This means that in \eqref{eq:14} the summand for $i = M$ is dominating and we get
\begin{eqnarray*}
&& P_{n,\beta}^h (x_0(1-\sqrt{\lambda/\beta})\leq X_n < x|\, X_n +Y_n \geq x_0 +\varepsilon) \\
&\leq& M e^{-n^{2\alpha-1}(I(x_0(1-\sqrt{\lambda/\beta})+\frac{M-1}{M}\hat{x})+K(x_0\sqrt{\lambda/\beta}+\varepsilon-\hat{x})-J(x_0 +\varepsilon)-\delta)}\nonumber\\
&=& M e^{-n^{2\alpha-1}\left(\frac{1}{2\sigma^2}(x-\hat{x}/M)^2+\frac{\beta}{2}(x_0-x+\varepsilon)^2-\frac{\lambda}{2}(x_0 +\varepsilon)^2-\delta\right)}\\
&=& M e^{-n^{2\alpha-1}\left(\delta -\frac{x\hat{x}}{\sigma^2 M} + \frac{\hat{x}^2}{2\sigma^2M^2} \right)},
\end{eqnarray*}
which is converging to 0 if $M$ is sufficiently large. That finishes the proof of \eqref{eq:11}.

To complete the proof of the upper bound in \eqref{eq:sufficient} for $x \geq 0$ we need to prove \eqref{eq:12}. As a start we note that 
\begin{eqnarray*}
&& P_{n,\beta}^h (Y_n < x_0(1-\sqrt{\lambda \sigma^2})|\, X_n +Y_n \geq x_0 +\varepsilon)\\
&\leq& \frac{P_{n,\beta}^h (X_n \geq x_0\sqrt{\lambda \sigma^2} +\varepsilon)}{P_{n,\beta}^h (X_n +Y_n \geq x_0 +\varepsilon)}\\
&\leq& e^{-n^{2\alpha-1}(I(x_0\sqrt{\lambda \sigma^2} +\varepsilon)-J(x_0 +\varepsilon)-\frac{\lambda^2}{2(\beta-\lambda)}\varepsilon^2)}\\
&=& e^{-n^{2\alpha-1}\varepsilon x_0 \lambda (\sqrt{\beta/(\beta-\lambda)}-1)}
\end{eqnarray*}
for $n$ sufficiently large, which converges to 0. Therein, we have used the MDP for $P_{n,\beta}^h \circ (X_n +Y_n)^{-1}$ and the upper bound for $\limsup_{n \to \infty}\frac{1}{n^{2\alpha-1}} \ln P_{n,\beta}^h (X_n \geq \cdot)$ obtained before. If $x=0$, then $x_0(1-\sqrt{\lambda \sigma^2}) = x_0-x$ and we have proved \eqref{eq:12}. Otherwise, $x_0(1-\sqrt{\lambda \sigma^2}) < x_0-x$ and we need to show that
\begin{equation*}
P_{n,\beta}^h (x_0(1-\sqrt{\lambda \sigma^2}) \leq Y_n < x_0-x|\, X_n +Y_n \geq x_0 +\varepsilon) ~=~ o(1).
\end{equation*}
We divide the interval $[x_0(1-\sqrt{\lambda \sigma^2}), x_0-x)$ into $M \in \N$ subintervals, each of lenght $\tilde{x}/M$ where $\tilde{x} \Defi x_0\sqrt{\lambda \sigma^2}-x > 0$. With the same ideas as before we show
\begin{eqnarray*}
&& P_{n,\beta}^h (x_0(1-\sqrt{\lambda \sigma^2}) \leq Y_n < x_0-x|\, X_n +Y_n \geq x_0 +\varepsilon)\\
&\leq& \sum_{i=1}^M \frac{P_{n,\beta}^h (X_n \geq x_0 \sqrt{\lambda \sigma^2} +\varepsilon -\frac{i}{M}\tilde{x}) P_{n,\beta}^h(Y_n \geq x_0 (1-\sqrt{\lambda \sigma^2})+ \frac{i-1}{M}\tilde{x})}{P_{n,\beta}^h (X_n +Y_n \geq x_0 +\varepsilon)}\\
&\leq& \sum_{i=1}^M e^{-n^{2\alpha-1}\left(I(x_0 \sqrt{\lambda \sigma^2} +\varepsilon -\frac{i}{M}\tilde{x})+K(x_0 (1-\sqrt{\lambda \sigma^2})+ \frac{i-1}{M}\tilde{x})-J(x_0 +\varepsilon)-\frac{\lambda^2}{4(\beta-\lambda)}\varepsilon^2\right)}\\
&\leq& M e^{-n^{2\alpha-1}\left(I(x_0 \sqrt{\lambda \sigma^2} +\varepsilon -\tilde{x})+K(x_0 (1-\sqrt{\lambda \sigma^2})+ \frac{M-1}{M}\tilde{x})-J(x_0 +\varepsilon)-\frac{\lambda^2}{4(\beta-\lambda)}\varepsilon^2\right)}\\
&=& M e^{-n^{2\alpha-1}\left(\frac{\lambda^2}{4(\beta-\lambda)}\varepsilon^2 - \frac{\beta(x_0-x)\tilde{x}}{M} + \frac{\beta \tilde{x}^2}{2 M^2}\right)}
\end{eqnarray*}
for $n$ sufficiently large, which again converges to 0 for $M$ sufficiently large. This ends the proof of \eqref{eq:12} yields the lower bound in \eqref{eq:sufficient} for $x \geq 0$.

We are left to prove the lower bound in equation \eqref{eq:sufficient} for $x \leq 0$. With the same arguments like the ones used in the case $x \geq 0$ it suffices to show
\begin{equation*}
\liminf_{n \to \infty}\frac{1}{n^{2\alpha-1}} \ln \frac{P_{n,\beta}^h (X_n \leq x) P_{n,\beta}^h (Y_n \leq x_0-x)}{P_{n,\beta}^h (X_n +Y_n \leq x_0-\varepsilon)} ~\geq~ 0
\end{equation*}
for all $\varepsilon > 0$. This follows from 
\begin{eqnarray*}
P_{n,\beta}^h (X_n > x|\, X_n +Y_n \leq x_0 -\varepsilon) &=& o(1),\\
P_{n,\beta}^h (Y_n > x_0-x|\, X_n +Y_n \leq x_0 -\varepsilon) &=& o(1).
\end{eqnarray*}
Again, with the same arguments as before we show 
\begin{eqnarray*}
P_{n,\beta}^h (X_n > x_0(1-\sqrt{\lambda/\beta})|\, X_n +Y_n \leq x_0 -\varepsilon) &\leq& e^{-n^{2\alpha-1} \varepsilon x_0(\sqrt{\lambda \beta}-\lambda)} \text{ and}\\
P_{n,\beta}^h (Y_n > x_0(1-\sqrt{\lambda\sigma^2})|\, X_n +Y_n \leq x_0 -\varepsilon) &\leq& e^{-n^{2\alpha-1}\varepsilon x_0 \lambda (\sqrt{\beta/(\beta-\lambda)}-1)}
\end{eqnarray*}
for $n$ sufficiently large. Thus, it is left to show
\begin{eqnarray*}
P_{n,\beta}^h (x < X_n \leq x_0(1-\sqrt{\lambda/\beta})|\, X_n +Y_n \leq x_0 -\varepsilon) &=& o(1) \text{ and}\\
P_{n,\beta}^h (x_0-x < Y_n \leq x_0(1-\sqrt{\lambda\sigma^2})|\, X_n +Y_n \leq x_0 -\varepsilon) &=& o(1).
\end{eqnarray*}
To that end, we divide the intervals $(x,x_0(1-\sqrt{\lambda/\beta})]$ resp.\ $(x_0-x,x_0(1-\sqrt{\lambda\sigma^2})]$ into $M \in \N$ subintervals, each of lenght $\hat{x}/M$ resp.\ $\tilde{x}/M$ where $\hat{x} = x_0(1-\sqrt{\lambda/\beta})-x$ and $\tilde{x} = x-x_0\sqrt{\lambda\sigma^2}$. Following the lines of the previous case, we get for $n$ sufficiently large
\begin{eqnarray*}
&& P_{n,\beta}^h (x < X_n \leq x_0(1-\sqrt{\lambda/\beta})|\, X_n +Y_n \leq x_0 -\varepsilon) \\
&\leq& M e^{-n^{2\alpha-1}\left(\frac{(\beta-\lambda)\varepsilon^2}{4} +\frac{x\hat{x}}{\sigma^2 M} + \frac{\hat{x}^2}{2\sigma^2M^2} \right)}
\end{eqnarray*}
and
\begin{eqnarray*}
&& P_{n,\beta}^h (x_0-x < Y_n \leq x_0(1-\sqrt{\lambda\sigma^2})|\, X_n +Y_n \leq x_0 -\varepsilon) \\
&\leq& M e^{-n^{2\alpha-1}\left(\frac{\lambda^2}{4(\beta-\lambda)}\varepsilon^2 + \frac{\beta(x_0-x)\tilde{x}}{M} + \frac{\beta \tilde{x}^2}{2 M^2}\right)},
\end{eqnarray*}
where we now have used that the dominant terms are the ones for $i=1$, i.\,e.
\begin{eqnarray*}
P_{n,\beta}^h (x < X_n \leq x+\hat{x}/M|\, X_n +Y_n \leq x_0 -\varepsilon) \text{ resp.}\\
P_{n,\beta}^h (x_0-x < Y_n \leq x_0-x+\tilde{x}/M|\, X_n +Y_n \leq x_0 -\varepsilon).
\end{eqnarray*}
That finishes the proof of the lower bound in equation \eqref{eq:sufficient} and thus all in all part (i).\\

ad (ii): Let $B_n \Defi [-a n^{1-\alpha},a n^{1-\alpha}]$ and choose the realization $h$ of $\mathbf{h}$ such that not only the MDP for $P_{n,\beta}^h (X_n + Y_n \in \bullet |\, X_n +Y_n \in B_n)$ holds, but also that
\begin{equation*}
P_{n,\beta}^h \left(\frac{S_n}{n} \in \bullet \right)
\end{equation*}
satisfies an LDP with speed $n$ and rate function $I_{\beta}^{\nu}(x) = \sup_{y\in\R}\{G(y)-\frac{\beta}{2}(x-y)^2\}-\inf_{w \in \R} G(w)$. This can be done with probability 1 due to \cite{Loew2012}. Using the G\"{a}rtner-Ellis Theorem, we see that 
\begin{equation*}
P_{n,\beta}^h \left( \frac{W}{\sqrt{n}} \in \bullet \right)
\end{equation*}
satisfies an LDP with speed $n$ and rate function $K(x) = \beta x^2/2$. Thus, a use of the contraction principle yields (cf.\ Exercise 4.2.7 in \cite{Demb1998}) that 
\begin{equation*}
\left(P_{n,\beta}^h \left(\frac{S_n}{n} + \frac{W}{\sqrt{n}} \in \bullet \right)\right)_{n \in \N}
\end{equation*}
satisfies an LDP with speed $n$ and rate function $N$ given by
\begin{eqnarray*}
N(x) 
&=& \inf_{\substack{(y,z) \in \R^2:\\ y+z=x}}\left(K(y)+I_{\beta}^{\nu}(z)\right)\\
&=& \inf_{z \in \R}\left(\frac{\beta}{2}(x-z)^2+\sup_{w \in \R}\left(G(w)-\frac{\beta}{2}(w-z)^2\right)\right) -\inf_{w \in \R} G(w)\\
&=& \frac{\beta}{2}x^2 + \inf_{z \in \R}\left(-\beta x z +\sup_{w \in \R}\left(G(w)-\frac{\beta}{2}w^2 +\beta w z\right)\right) -\inf_{w \in \R} G(w)\\
&=& \frac{\beta}{2}x^2 - \sup_{z \in \R}\left(x z -\sup_{w \in \R}\left(w z - \int_{\R}\ln \cosh[\beta(w+h)]d\nu(h)\right)\right) -\inf_{w \in \R} G(w)\\
&=& G(x)-\inf_{w \in \R} G(w)
\end{eqnarray*}
where we have used the duality lemma for Legendre-Fenchel transforms (cf.\ Lemma 4.5.8 in \cite{Demb1998}) to derive the last line. Since $X_n$ and $Y_n$ are \emph{not} independent under the measure $P_{n,\beta}^h (\bullet|\, X_n +Y_n \in B_n)$ we cannot proceed as in part (i). Instead, we start by showing that the MDP for $P_{n,\beta}^h (X_n + Y_n \in \bullet |\, X_n +Y_n \in B_n)$ transfers to the same MDP for 
\begin{equation*}
P_{n,\beta}^h (X_n + Y_n \in \bullet \Big|\, X_n \in B_n).
\end{equation*}
In order to do this, we show that the two sequences are exponentially equivalent on the scale $n^{1-2k(1-\alpha)}$, i.\,e.
\begin{equation}\label{eq:exp}
\limsup_{n \to \infty} \frac{1}{n^{1-2k(1-\alpha)}}\log \rho_n ~=~ -\infty
\end{equation}
where
\begin{equation*}
\rho_n ~\Defi~ \sup_{B \in \mathcal{B}(\R)}\big\{P_{n,\beta}^h (X_n +Y_n \in B |\, X_n \in B_n)-P_{n,\beta}^h (X_n + Y_n \in B |\, X_n + Y_n\in B_n)\big\}.
\end{equation*}
Note that for every $B \in \mathcal{B}(\R)$
\begin{eqnarray*}
&& P_{n,\beta}^h (X_n + Y_n \in B |\, X_n \in B_n)-P_{n,\beta}^h (X_n + Y_n \in B |\, X_n +Y_n \in B_n) \\
&\leq& P_{n,\beta}^h (|Y_n| > n^{(1-\alpha)/2}) + P_{n,\beta}^h (X_n + Y_n \in B,|Y_n| \leq n^{(1-\alpha)/2} |\, X_n \in B_n)\\
&& -P_{n,\beta}^h (X_n + Y_n \in B |\, X_n +Y_n \in B_n)\\
&=& P_{n,\beta}^h (|Y_n| > n^{(1-\alpha)/2})+ \left(\frac{1}{P_{n,\beta}^h (X_n \in B_n)}-\frac{1}{P_{n,\beta}^h (X_n +Y_n \in B_n)}\right) \\
&& \times \ P_{n,\beta}^h (X_n + Y_n \in B, X_n \in B_n, |Y_n| \leq n^{(1-\alpha)/2})\\
&& + \frac{P_{n,\beta}^h (X_n + Y_n \in B, X_n \in B_n, |Y_n| \leq n^{(1-\alpha)/2}) - P_{n,\beta}^h (X_n + Y_n \in B \cap B_n)}{P_{n,\beta}^h (X_n +Y_n \in B_n)}
\end{eqnarray*}
and consequently $\rho_n$ is bounded by
\begin{eqnarray*}
&& P_{n,\beta}^h (|Y_n| > n^{(1-\alpha)/2}) + \frac{P_{n,\beta}^h (X_n +Y_n \in B_n)-P_{n,\beta}^h (X_n\in B_n)}{P_{n,\beta}^h (X_n +Y_n \in B_n)}\vee0\\
&& + \frac{P_{n,\beta}^h (X_n + Y_n \in[-an^{1-\alpha}-n^{(1-\alpha)/2},-an^{1-\alpha}]\cup[an^{1-\alpha},an^{1-\alpha}+n^{(1-\alpha)/2}])}{P_{n,\beta}^h (X_n +Y_n \in B_n)}.
\end{eqnarray*}
Using Lemma 1.2.15 from \cite{Demb1998} \eqref{eq:exp} follows from proving that each of the three summands converges to $-\infty$ on a logarithmic scale of order $n^{1-2k(1-\alpha)}$.

First,
\begin{equation*}
\limsup_{n \to \infty} \frac{1}{n^{1-2k(1-\alpha)}}\log P_{n,\beta}^h (|Y_n| > n^{(1-\alpha)/2}) ~=~ -\infty
\end{equation*}
follows immediately from the standard estimate 
\begin{equation*}
P(Z > x) ~\leq~ \frac{1}{x \sqrt{2\pi}}\ e^{-\frac{x^2}{2}}
\end{equation*}
for a standard Gaussian $Z, x > 0$.

Second, with $\delta = b - c > 0$ it is
\begin{eqnarray*}
&& \frac{P_{n,\beta}^h (X_n +Y_n \in B_n)-P_{n,\beta}^h (X_n\in B_n)}{P_{n,\beta}^h (X_n +Y_n \in B_n)}\vee0\\
&=& \frac{P_{n,\beta}^h (S_n/n +W/\sqrt{n} \in [m-a,m+a])-P_{n,\beta}^h (S_n/n\in [m-a,m+a])}{P_{n,\beta}^h (S_n/n +W/\sqrt{n} \in [m-a,m+a])}\vee0\\
&\leq& \frac{P_{n,\beta}^h (S_n/n \in [m-a-\delta,m-a]\cup[m+a,m+a+\delta])}{P_{n,\beta}^h (S_n/n +W/\sqrt{n} \in [m-a,m+a])}\\
&& + \frac{P_{n,\beta}^h (|W/\sqrt{n}|>\delta)}{P_{n,\beta}^h (S_n/n +W/\sqrt{n} \in [m-a,m+a])}.
\end{eqnarray*}
Using Lemma 1.2.15 in \cite{Demb1998} we can again consider the two terms separately. We find
\begin{eqnarray*}
&&\lim_{n \to \infty} \frac{1}{n}\log \frac{P_{n,\beta}^h (S_n/n \in [m-a-\delta,m-a]\cup[m+a,m+a+\delta])}{P_{n,\beta}^h (S_n/n +W/\sqrt{n} \in [m-a,m+a])}\\
&=& -\inf_{x \in [m-a-\delta,m-a]\cup[m+a,m+a+\delta]}I_{\beta}^{\nu}(x)+\inf_{x \in [m-a,m+a]}N(x)\\
&\leq& -\inf_{x \in [m-a-\delta,m-a]\cup[m+a,m+a+\delta]}G(x)+G(m)\\
&<& 0
\end{eqnarray*}
and
\begin{eqnarray*}
&&\lim_{n \to \infty} \frac{1}{n}\log \frac{P_{n,\beta}^h (|W/\sqrt{n}|>\delta)}{P_{n,\beta}^h (S_n/n +W/\sqrt{n} \in [m-a,m+a])}\\
&=& -\frac{\beta \delta^2}{2} +\inf_{x \in [m-a,m+a]}G(x) - \inf_{x\in\R}G(x)\\
&=& -\frac{\beta \delta^2}{2} + h\\
&<& 0.
\end{eqnarray*}
Consequently,
\begin{equation*}
\limsup_{n \to \infty} \frac{1}{n^{1-2k(1-\alpha)}}\log \frac{P_{n,\beta}^h (X_n +Y_n \in B_n)-P_{n,\beta}^h (X_n\in B_n)}{P_{n,\beta}^h (X_n +Y_n \in B_n)}\vee0 ~=~ -\infty.
\end{equation*}

Finally, since $m$ is the only minimum of $G$ in $[m-a,m+a]$ we can choose $\tilde{a} > a$ such that $m$ is also the only minimum of $G$ in $[m-\tilde{a},m+\tilde{a}]$. Note that
\begin{eqnarray*}
&& \limsup_{n \to \infty} \frac{1}{n}\log P_{n,\beta}^h (X_n + Y_n \in[-an^{1-\alpha}-n^{(1-\alpha)/2},-an^{1-\alpha}]\\
&& \cup[an^{1-\alpha},an^{1-\alpha}+n^{(1-\alpha)/2}])\\
&\leq& \limsup_{n \to \infty} \frac{1}{n}\log P_{n,\beta}^h (S_n/n +W/\sqrt{n} \in [m-\tilde{a},m-a]\cup[m+a,m+\tilde{a}])\\
&=& -\inf_{x \in [m-\tilde{a},m-a]\cup[m+a,m+\tilde{a}]}G(x)+\inf_{x\in\R}G(x)\\
&<& -G(m)+\inf_{x\in\R}G(x)\\
&=& -\inf_{x \in [m-a,m+a]}G(x) + \inf_{x\in\R}G(x)\\
&=& \lim_{n \to \infty} \frac{1}{n} \log P_{n,\beta}^h (S_n/n + W/\sqrt{n} \in [m-a,m+a])\\
&=& \lim_{n \to \infty} \frac{1}{n} \log P_{n,\beta}^h (X_n + Y_n \in B_n)
\end{eqnarray*}
and therefore \eqref{eq:exp} follows.

Now that we know that $P_{n,\beta}^h (X_n + Y_n \in \bullet|\, X_n \in B_n)$ satisfies an MDP with speed $n^{1-2k(1-\alpha)}$ and rate function $J$ it is straightforward to prove the MDP for $P_{n,\beta}^h (X_n \in \bullet|\, X_n \in B_n)$. Since $X_n$ and $Y_n$ are independent under the measure $P_{n,\beta}^h ( \bullet |\, X_n \in B_n)$, the proof can be finished completely analogously to the proof of part (i) with $P_{n,\beta}^h$ replaced by $P_{n,\beta}^h ( \bullet |\, X_n \in B_n)$.
\end{proof}

\bibliographystyle{plain}
{\footnotesize \bibliography{transfer}}

\begin{thebibliography}{1}

\bibitem{Demb1998}
Amir Dembo and Ofer Zeitouni.
\newblock {\em Large deviations techniques and applications}, volume~38 of {\em
  Applications of Mathematics (New York)}.
\newblock Springer-Verlag, New York, second edition, 1998.

\bibitem{Loe2012}
Matthias L{\"o}we and Raphael Meiners.
\newblock Moderate deviations for random field {C}urie-{W}eiss models.
\newblock {\em To appear in J. Statist. Phys.}, 2012+.
\newblock Preprint available at \texttt{arxiv.org/abs/1206.0895}.

\bibitem{Loew2012}
Matthias L{\"o}we, Raphael Meiners, and Felipe Torres.
\newblock Large deviations principle for {C}urie-{W}eiss models with random
  fields.
\newblock 2011.
\newblock Preprint available at \texttt{arxiv.org/abs/1112.5380}.

\end{thebibliography}
\end{document}